\newtheorem{theorem}{Theorem}
\begin{document}

\title{Dimitrov's question for the polynomials of degree 1,2,3,4,5,6}

\author{Dmitriy Dmitrishin, Ivan Skrinnik, Andrey Smorodin, and Alex Stokolos}


\maketitle

\begin{abstract}
D.~Dimitrov \cite{D} has posted the problem of finding the optimal polynomials that provide the sharpness of Koebe Quarter Theorem for polynomials and  asked whether Suffridge polynomials \cite{S} are optimal ones. We disproved Dimitrov's conjecture for polynomials of degree 3,4,5 and 6. For polynomials of degree 1 and 2 the conjecture is valid.

\end{abstract}

\section{Introduction}  
One of the fundamental results in the geometric complex analysis is the famous Koebe Quarter Theorem.  It states that for any function $f\in \mathcal U_n$ the image $f(\mathbb D)$ contains a disc of radius 1/4, whether $\mathbb D=\{|z|<1\}$ and ${\mathcal U_n}=\{f(z): f(0)=0, f^\prime(0)=1,\; f(z) \;\mbox{is univalent in}\;\mathbb D\}.$ The 1/4 bound is sharp as it is indicated by the Koebe function $K(z)=z/(1-z)^2.$
A natural question is whether the constant 1/4 can be improved for  polynomial of specific degree. Say, for polynomials of the first degree it is trivially 1; a simple computation demonstrates that for polynomials of degree 2 it is 1/2. The task was formalized by Dimitrov \cite[Problem 5]{D} who posted the following problem

{\it 
 For any $n\in\mathbb Z_+$ find the polynomial $p_n(z)\in \mathcal U_n$, for which the infimum
$
\inf\{|p_n(z)| : z = e^{it}, 0\le t\le2\pi\}
$
is attained.}

By the Koebe Quarter Theorem the above infimums are bounded from below by 1/4.

C\'ordova and Ruscheweyh \cite{CR} considered the Suffridge polynomials \cite{S}
$$
S_{n,j}(z) = \sum^n_{k=1}
\left(1 -\frac{k-1}n\right) \frac{\sin(\pi jk /(n + 1))}{\sin(\pi j /(n + 1))}z^k.
$$
Note that $S_{n,j}(z)\in \mathcal U_n$ and $|S_{n,1}(-1)|=\frac14\frac{n+1}n\sec^2\frac\pi{2(n+1)}\to −1/4.$ Hence these polynomials solve the latter problem at least asymptotically.  

Note that the value $\frac14\frac{n+1}n\sec^2\frac\pi{2(n+1)}$ is the Koebe radius only for polynomials $S_{n,1}(z)$ of even degree. For the polynomials of odd degree the quantity $\inf\{|S_{n,1}(z)|:|z|=1\}$ is not achived at the point $z=-1,$ rather a different point $\xi,$ such that $S^\prime_{n,1}(\xi)=0.$ (see Fig 1).

\centerline{
\includegraphics[scale=0.25]{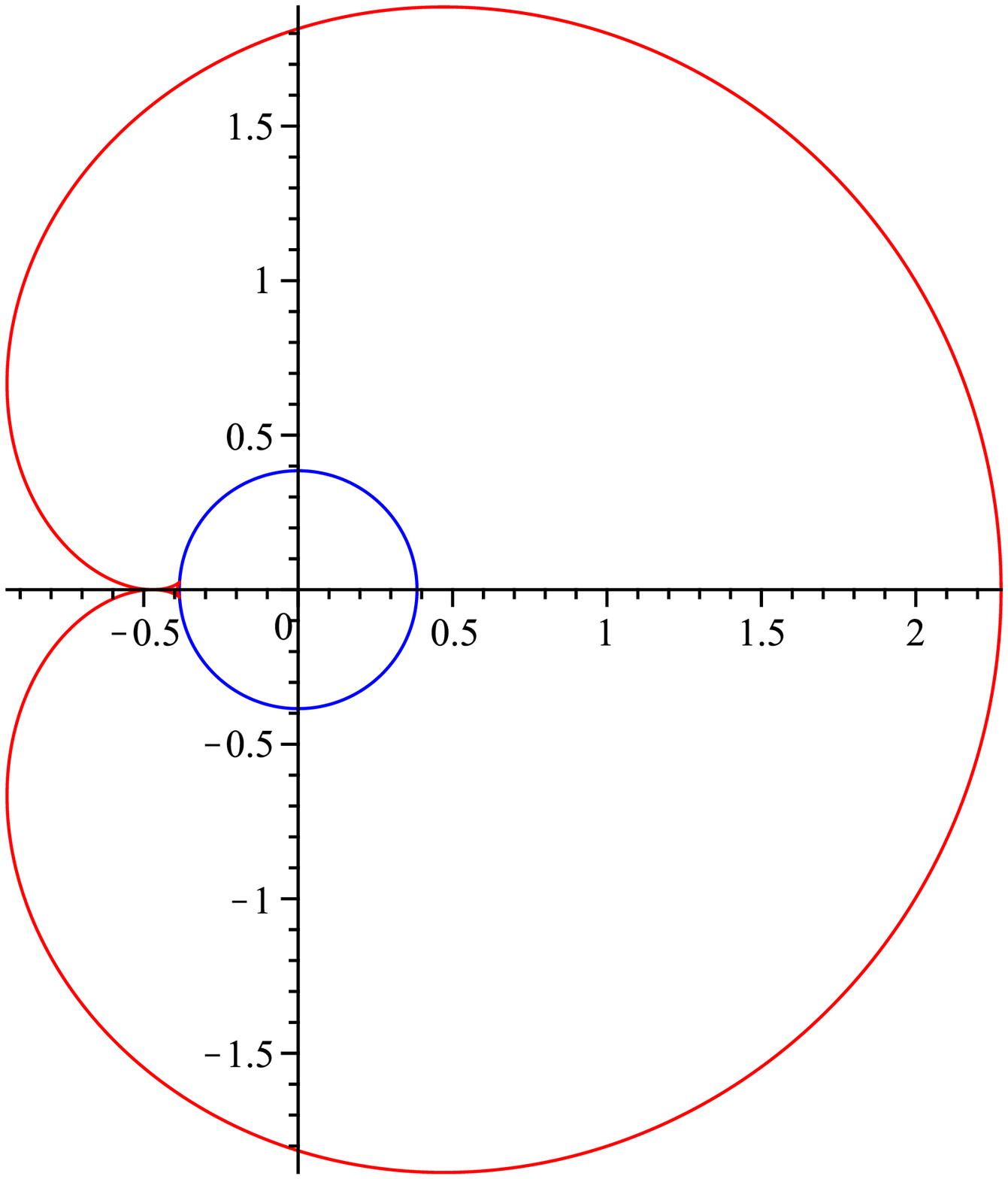}
\includegraphics[scale=0.25]{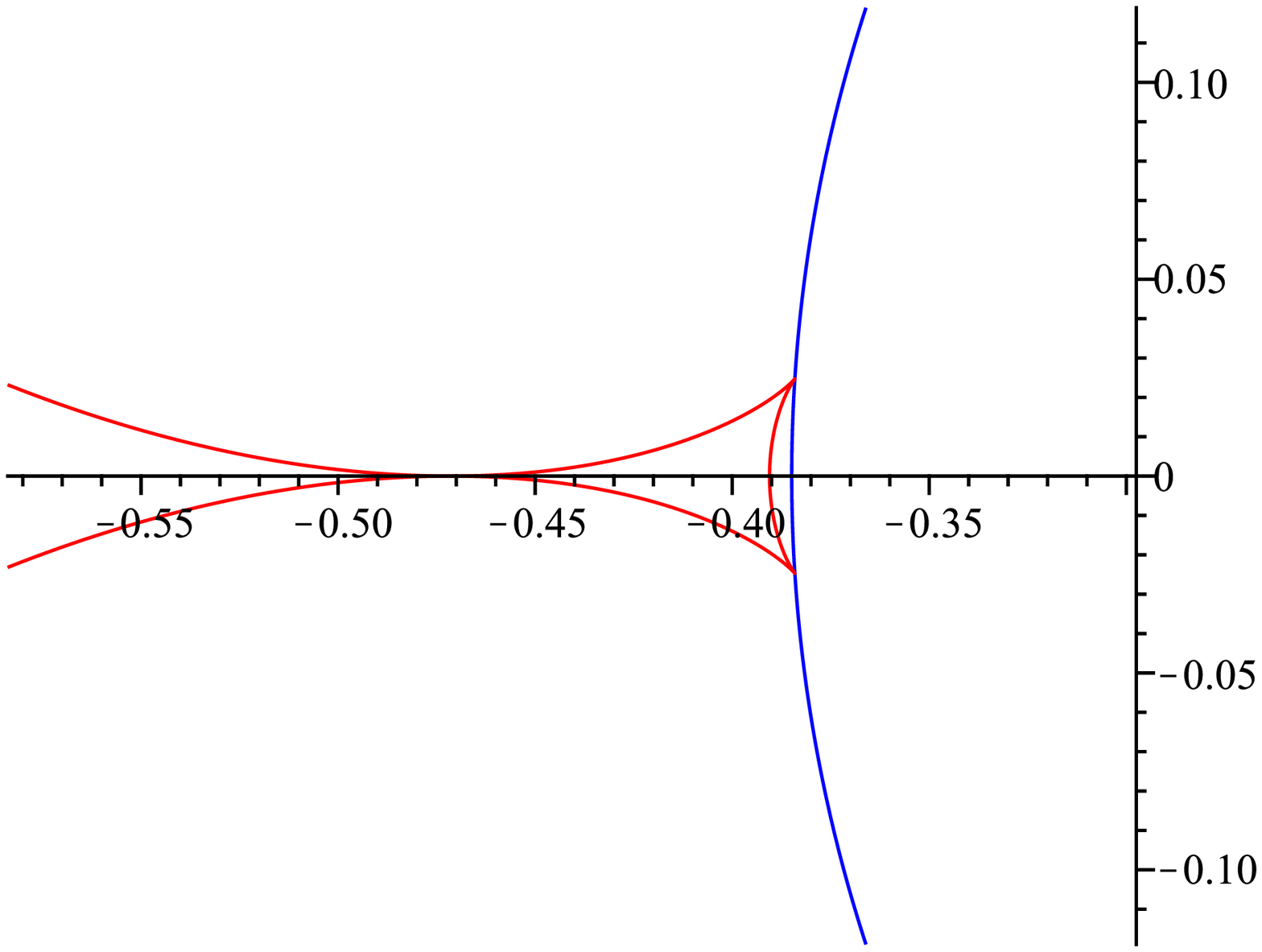}}
\centerline{Fig 1: The image and fragment for $S_{3,1}(\mathbb D).$}
\medskip

Note that for $n=3,$ $|S_{3,1}(-1)|\approx0.3905$ while the Koebe radius is $r_3\approx 0.3849.$ For $n=5$ $|S_{5,1}(-1)|\approx0.3215$ while the Koebe radius is $r_5\approx 0.3196.$ Note that  $r_2=0.5,$ $r_4\approx 0.3455,$ $r_6\approx 0.3069.$

Dimitrov \cite[p.15]{D} asked a specific question about the Suffridge polynomial: {\it  Is it the extremal one for every fixed $N$}? Note that they are indeed extremal for $N=1,2.$ Below we prove that the answer is negative for $N=3,4,5,6.$

\section{New extremal polynomials}

Univalent polynomials are classical objects of complex analysis. Perhaps, the first systematic approach was suggested by Alexander \cite{A} who proved that the truncated sums of the Taylor series of the function $f(z)=\log(1/(1-z))$ are univalent in $\mathbb D$ polynomials. Note that Alexander's paper contains many ideas that were not properly estimated at that time , c.f. \cite{GH}. The subtlety of the situation well illustrates the fact that a necessary condition of univalency - the derivative does not vanish in $\mathbb D$ - implies that the n-th coefficient of the polynomial of degree $n$ cannot exceed $1/n$ in absolute value. This is perfectly fine with the logarithm function and awfully wrong with the Koebe function. Thus, Suffridge polynomials can be treated as reasonable substitutions for the function $K(z).$ These polynomials are extremal in a way that they have the $n$-th coefficient exactly $1/n.$

Thus, so far we have two families of extremal univalent polynomials in play - Alexander polynomials and Suffridge polynomials. The main discovery of the current paper is a new extremal family of polynomials that seem to be univalent in $\mathbb D$ and might be as important as the two mentioned above series.
Namely, the following polynomials were introduced in \cite{DSS}.

\begin{equation*}
P_N(z)=
 \frac1{ { { U'_N}\left ({\cos \frac{\pi } 
{ N+2 } } \right) } }\sum_{k=1}^N
 { U'_{ N - k + 1 } }
\left (\cos \frac{\pi } { N+2 } \right) 
{ U_ { k - 1 } }\left (\cos \frac{\pi }{ N+2 } \right)z^k,  
\end{equation*}
where \({ U_k}\left(x \right) \) is a family of Chebyshev polynomials of the second kind and \({ U'_k}\left(x \right) \) is a derivative.

One given below some examples: 
$$
P_1(z)=z,\quad
P_2(z)=z+\frac12z^2,\quad
$$
$$
P_3(z)=z+\frac2{\sqrt5}z^2+\frac12\left(1- \frac1{\sqrt5}\right)z^3,\quad P_4(z)=z+\frac76z^2+\frac23z^3+\frac16z^4,
$$

\begin{align*}
P_5(z)=
z+&{\frac {8-40\, \left( \cos \left( \pi/7 \right)  \right) ^{2}+32\,
 \left( \cos \left( \pi/7 \right)  \right) ^{3}-24\,\cos \left( \pi/7
 \right) }{40\, \left( \cos \left( \pi/7 \right)  \right) ^{3}-30\,
\cos \left( \pi/7 \right) -32\, \left( \cos \left( \pi/7 \right) 
 \right) ^{2}+7}}{z}^{2}+ \\
&{\frac {24\, \left( \cos \left( \pi/7
 \right)  \right) ^{3}-28\, \left( \cos \left( \pi/7 \right)  \right) 
^{2}-18\,\cos \left( \pi/7 \right) +4}{40\, \left( \cos \left( \pi/7
 \right)  \right) ^{3}-30\,\cos \left( \pi/7 \right) -32\, \left( \cos
 \left( \pi/7 \right)  \right) ^{2}+7}}{z}^{3}+\\
&{\frac {16\, \left( 
\cos \left( \pi/7 \right)  \right) ^{3}-16\, \left( \cos \left( \pi/7
 \right)  \right) ^{2}-12\,\cos \left( \pi/7 \right) +4}{40\, \left( 
\cos \left( \pi/7 \right)  \right) ^{3}-30\,\cos \left( \pi/7 \right) 
-32\, \left( \cos \left( \pi/7 \right)  \right) ^{2}+7}}{z}^{4}+\\
&{\frac {8\, \left( \cos \left( \pi/7 \right)  \right) ^{3}-4\, \left( 
\cos \left( \pi/7 \right)  \right) ^{2}-6\,\cos \left( \pi/7 \right) +
1}{40\, \left( \cos \left( \pi/7 \right)  \right) ^{3}-30\,\cos
 \left( \pi/7 \right) -32\, \left( \cos \left( \pi/7 \right)  \right) 
^{2}+7}}{z}^{5}
\end{align*}

$$
P_6(z)=
z+{\frac {9+8\,\sqrt {2}}{4\,\sqrt {2}+8}}{z}^{2}+{\frac {6\,\sqrt {2
}+10}{4\,\sqrt {2}+8}}{z}^{3}+{\frac {4\,\sqrt {2}+6}{4\,\sqrt {2}+8}}
{z}^{4}+{\frac {2\,\sqrt {2}+2}{4\,\sqrt {2}+8}}{z}^{5}+ \frac1{4\,
\sqrt {2}+8}{z}^{6}
$$

\begin{theorem}   
The following presentation is valid for $t\in(0,\pi),\; t\not=\frac{2\pi}{N+2}$ 
$$
\notag
P_N(e^{it})=\frac{1}{2\left( \cos t- \cos\frac{2\pi}{N+2}\right)}+\frac{1-\cos\frac{2\pi}{N+2}}{(N+2)(1-\cos t)}
\frac{\sin t\sin\frac{N+2}{2}t}{\left(\cos t-\cos\frac{2\pi}{N+2}\right)^2} e^{\frac{N+2}{2}it}.
$$
\end{theorem}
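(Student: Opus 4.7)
The plan is a direct reduction of the defining sum to finite trigonometric series and their evaluation via geometric-series formulas. Set $\alpha = \pi/(N+2)$ and $\beta = 2\alpha$, so that $(N+2)\alpha = \pi$; in particular $\sin((N-k+2)\alpha) = \sin(k\alpha)$ and $\cos((N-k+2)\alpha) = -\cos(k\alpha)$. Differentiating the standard identity $\sin\alpha\, U_k(\cos\alpha) = \sin((k+1)\alpha)$ with respect to $\alpha$ yields
$$
U_k'(\cos\alpha)\sin^2\alpha = \cos\alpha\,U_k(\cos\alpha) - (k+1)\cos((k+1)\alpha),
$$
and in particular $U_N'(\cos\alpha) = (N+2)\cos\alpha/\sin^2\alpha$. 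Feeding these into the definition of $P_N$ and using the symmetries above, the coefficient of $z^k$ in $P_N(z)$ collapses to
$$
\frac{\sin^2(k\alpha)}{(N+2)\sin^2\alpha} + \frac{(N-k+2)\sin(k\beta)}{(N+2)\sin\beta}.
$$

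Setting $z = e^{it}$ and using $2\sin^2\alpha = 1-\cos\beta$, one splits $P_N(e^{it}) = A(t) + B(t)$ with
$$
A(t) = \frac{1}{(N+2)(1-\cos\beta)}\sum_{k=1}^N \bigl(1-\cos(k\beta)\bigr)e^{ikt}, \quad B(t) = \frac{1}{(N+2)\sin\beta}\sum_{k=1}^N (N-k+2)\sin(k\beta)\,e^{ikt}.
$$
Each is a linear combination of elementary geometric sums in $e^{it}$, $e^{i(t+\beta)}$, and $e^{i(t-\beta)}$; the weight $N-k+2$ in $B(t)$ is handled either by Abel summation or by differentiating a geometric sum in an auxiliary parameter. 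Repeated use of $e^{i(N+2)\beta/2} = e^{i\pi} = -1$ collapses the boundary terms, so that the only surviving $N$-dependence enters through the factor $e^{i(N+2)t}-1 = 2ie^{i(N+2)t/2}\sin\frac{(N+2)t}{2}$.

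The final step is to place everything over the common denominator $(1-\cos t)(\cos t-\cos\beta)^2$. After standard product-to-sum manipulations the $N$-independent piece collapses to $\frac{1}{2(\cos t-\cos\beta)}$, while the $N$-dependent remainder assumes exactly the form stated in the theorem. The main obstacle is purely algebraic bookkeeping: one must track three geometric sums with the distinct denominators $1-e^{it}$ and $1-e^{i(t\pm\beta)}$ and coalesce them into a two-term answer. The excluded value $t = 2\pi/(N+2)$ is precisely where $\cos t = \cos\beta$ degenerates one of these geometric series, and at that point the two displayed terms of the formula develop cancelling singularities (so the identity still extends continuously to that point).
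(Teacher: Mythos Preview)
Your proposal is correct and rests on exactly the same decomposition as the paper: after expressing the Chebyshev data in trigonometric form, both you and the paper arrive at the coefficient
$$
\frac{\sin^2(k\alpha)}{(N+2)\sin^2\alpha} + \frac{(N-k+2)\sin(k\beta)}{(N+2)\sin\beta}
$$
and split $P_N(e^{it})$ accordingly into your $A(t)+B(t)$.

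The one genuine difference is in how the $B$-piece is handled. You evaluate $\sum_{k=1}^N (N-k+2)\sin(k\beta)e^{ikt}$ directly via geometric sums and Abel summation. The paper instead extends the sum to $k=N+1$ (the added term vanishes), recognizes the result as $\frac{N+1}{N+2}S_{N+1,2}(e^{it})$ where $S_{N+1,2}$ is the second Suffridge polynomial of degree $N+1$, and then invokes a known closed-form identity (formula~(5) of Suffridge, \emph{J.\ London Math.\ Soc.}\ 44 (1969)) for $S_{n,j}(e^{it})$. The paper's route is shorter on the page because the heavy lifting is outsourced to Suffridge; your route is fully self-contained but requires carrying out the bookkeeping you allude to. Both land on the same formula.
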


\begin{proof}
First, let us write $P_N(z)$ in terms of trigonometric expressions \cite{DSS}
\begin{align*}
P_N(z)=
 \frac1{ \left(N+2\right)\sin\frac{2\pi}{N+2}}\sum_{k=1}^N\bigg[&\left(N-k+3\right)\sin\frac{\left(k+1\right)\pi}{N+2}-\\
&\left(N-k+1\right)\sin\frac{\left(k-1\right)\pi}{N+2}\bigg]\sin\frac{k\pi}{N+2}z^k
\end{align*}

Having in mind that
$$
\notag
\bigg[2\sin(\pi)-0\cdot \sin\frac{N\pi}{N+2}\bigg]\sin\frac{\left(N+1\right)\pi}{N+2}z^{N+1} \equiv 0
$$
we can change the upper bound for the range in the sum from $N$ to $N+1$. Further modification produces
$$
\notag
P_N(z)=
 \frac1{ \left(N+2\right)\sin\frac{2\pi}{N+2}}\sum_{k=1}^{N+1}\bigg[\left(N-k+2\right)\sin\frac{2k\pi}{N+2}+
2\cdot\frac{\cos\frac{\pi}{N+2}}{\sin\frac{\pi}{N+2}}\sin^2\frac{k\pi}{N+2}\bigg]z^k.
$$
An important observation is that
$$
\notag
\frac{N+1}{N+2}\cdot S_{N+1,2}(z)=\frac1{ \left(N+2\right)\sin\frac{2\pi}{N+2}}\sum_{k=1}^{N+1}\left(N-k+2\right)\sin\frac{2k\pi}{N+2}\cdot{z^k},
$$
where $S_{N+1,2}(z)$ is the second Suffridge polynom of order $N+1$. By formula (5) in \cite[p. 496]{S}, for $n=N+1$ and $j=2$ we get

\begin{align*}
\frac{N+2}{N+1}\cdot S_{N+1,2}\left(e^{it}\right)=\frac{1}{2\left(\cos t - \cos\frac{2\pi}{N+2}\right)}+
\frac{1}{N+2}\cdot \frac{\sin t \cdot \sin\frac{N+2}{2}t}{\left(\cos t-\cos\frac{2\pi}{N+2}\right)^2}\cdot e^{\frac{N+2}{2}it}
\end{align*}
Meanwhile
$$
\sum_{k=1}^{N+1}\sin^2\frac{k\pi}{N+2}e^{ikt}=\sin^2\frac{\pi}{N+2}\cdot\frac{\sin\frac{N+2}{2}t}{\cos{t}-\cos\frac{2\pi}{N+2}}\cdot
\frac{\sin t}{1-\cos t}\cdot e^{i\frac{N+2}{2}t}
$$
By combining both formulas, we get the formula in the theorem.
\end{proof}
Note that the right hand side has removable singularities, thus in fact it is a trigonometric polynomial.\\

 Let us fix a positive integer $N$ and let $R_N(e^{it})=|P_N(e^{it})|^2.$  The following theorem can be directly verified by tedious standard computations.

\begin{theorem}\label{thm2}   
The following presentation is valid for $t\in(0,\pi),\; t\not=\frac{2\pi}{N+2}$:
\begin{align*}
4|P_N(e^{it})|^2=&\left(\frac{\cos\frac{N+2}{2}t}{\cos t- \cos\frac{2\pi}{N+2}}+\frac2{N+2}\frac{1-\cos\frac{2\pi}{N+2}}{1-\cos t}
\frac{\sin t}{(\cos t- \cos\frac{2\pi}{N+2})^2}\sin\frac{N+2}{2}t\right)^2+\\
&\left(\frac{\sin\frac{N+2}{2}t}{\cos t- \cos\frac{2\pi}{N+2}} \right)^2.
\end{align*}

\end{theorem}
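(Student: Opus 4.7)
The plan is to read Theorem~1 as a two-term decomposition $P_N(e^{it})=A+Be^{i\varphi}$ with \emph{real} coefficients, and then compute the squared modulus after factoring out a unit-modulus factor. Concretely, I would set
$$
A:=\frac{1}{2\left(\cos t-\cos\frac{2\pi}{N+2}\right)},\qquad
B:=\frac{\bigl(1-\cos\frac{2\pi}{N+2}\bigr)\sin t\,\sin\frac{N+2}{2}t}{(N+2)(1-\cos t)\left(\cos t-\cos\frac{2\pi}{N+2}\right)^{2}},\qquad
\varphi:=\frac{N+2}{2}t,
$$
so that Theorem~1 reads exactly $P_N(e^{it})=A+Be^{i\varphi}$. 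The essential observation is that for $t\in(0,\pi)$ with $t\neq 2\pi/(N+2)$ both $A$ and $B$ are real numbers; this is what lets us split $P_N(e^{it})$ along the two independent real directions $1$ and $e^{i\varphi}$ in $\mathbb{C}$.

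Rather than squaring $A+Be^{i\varphi}$ directly, I would multiply by the unit-modulus factor $e^{-i\varphi}$ (which leaves the absolute value unchanged) and work with $B+Ae^{-i\varphi}$ instead. Its real and imaginary parts are $B+A\cos\varphi$ and $-A\sin\varphi$, so
$$
|P_N(e^{it})|^2 = |B+Ae^{-i\varphi}|^2 = (B+A\cos\varphi)^2 + A^2\sin^2\varphi.
$$
Multiplying by $4$ and distributing gives $4|P_N(e^{it})|^2=(2B+2A\cos\varphi)^2+(2A\sin\varphi)^2$, and substituting the explicit expressions for $2A$ and $2B$ above reproduces the claimed formula term by term.

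The computation is purely algebraic, so the only ``obstacle'' is bookkeeping. The one non-trivial choice is to factor out $e^{-i\varphi}$ rather than to expand $|A+Be^{i\varphi}|^2$ into the symmetric form $A^2+B^2+2AB\cos\varphi$: both give the same modulus, but only the former produces the asymmetric shape (a square containing both $\cos\varphi$ and $\sin\varphi$, plus a pure $\sin^2\varphi$ square) that matches Theorem~\ref{thm2}. This is presumably the form the authors want because the pure $\sin^2\varphi$ term vanishes exactly when $t=2\pi k/(N+2)$, which will be convenient for the extremality analysis in later sections. After making this choice, the ``tedious standard computation'' promised by the authors collapses to a single line of substitution.
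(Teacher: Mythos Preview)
Your proposal is correct and follows exactly the route the paper indicates: Theorem~\ref{thm2} is derived from Theorem~1 by ``tedious standard computations,'' and you have carried those out (in fact rather cleanly, via the unit-modulus factor $e^{-i\varphi}$). There is nothing to add.
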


Because the real coefficients symmetry of $P_N(e^{it})$ (the real part is an even function and the imaginary is an odd function of $t$), we denote $|P_N(e^{it})|^2=R_N(x)$ as a polynomial of $x=\cos(t).$ Let $b=\cos\frac{2\pi}{N+2}$ and $T_N$ be the Chebyshev polynomial of the first kind. 
From Theorem \ref{thm2} one can get the following formulas by straightforward computations:

$$
4R_N(x)=
\frac1{\left( x-b  \right) ^{2}}+
2\,{\frac { \left( 1-b\right)  \left( 1+x \right) U_{N+1} \left(x
 \right) }
 { \left( N+2 \right)  
 \left( x-b  \right) ^{3}}}+
2\,{\frac { \left( 1-b  \right) ^{2} \left( 1+x
 \right)  \left( 1- T_{N+2} \left( x \right)  \right) }{ \left( N+
2 \right) ^{2} \left( x-b  \right) ^{4} \left( 1-x
 \right) }},
$$

\begin{align*}
4(R_N(x))^\prime=\frac2{\left( b-x  \right) ^{3}}\left(1-\frac{1-b}{1-x}\left(1-T_{N+2}(x)\right)\left(1-\frac{4(1-b)(1+x)}{(N+2)^2(b-x)^2}-\frac{2(1-b)}{(N+2)^2(1-x)(b-x)}\right)+\right.\\
\frac{1-b}{1-x}-\frac{1-b}{1-x}\frac{1}{N+2}U_{N+1}(x)\frac{1-bx+3(1-x^2)}{b-x}\bigg). 
\end{align*}

\begin{theorem}\label{thm3}  
If $(R_N(x))^\prime>0$ for $x\in(-1,1)$ then the polynomial $P_N(z)$ is univalent in $\mathbb D$ and the Koebe radius of this polynomial is $\sqrt{R_N(-1)}.$
\end{theorem}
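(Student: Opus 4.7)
The statement has two parts: univalence of $P_N$ on $\mathbb{D}$ and the identification of the Koebe radius. My plan is to dispatch the Koebe-radius part immediately from the hypothesis, and then to prove univalence by applying the argument principle to the image curve $P_N(\partial\mathbb{D})$, using the explicit decomposition from Theorem~1.

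First, the Koebe radius. Under $(R_N)'>0$ on $(-1,1)$, $R_N$ is strictly increasing, so $R_N(\cos t)=|P_N(e^{it})|^2$ is strictly decreasing on $[0,\pi]$; by the conjugation symmetry $P_N(e^{-it})=\overline{P_N(e^{it})}$ the squared modulus is even in $t$, hence
\[
\min_{|z|=1}|P_N(z)| \;=\; |P_N(-1)| \;=\; \sqrt{R_N(-1)}.
\]
Once univalence is known, $P_N(\mathbb{D})$ is a simply connected domain containing $0$ whose boundary lies inside $P_N(\partial\mathbb{D})$, so the Koebe radius is precisely this boundary minimum.

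For univalence, I would apply the argument principle to $\Gamma(t)=P_N(e^{it})$. From Theorem~1 one writes $P_N(e^{it})=A(t)+B(t)e^{i(N+2)t/2}$ with real-valued $A,B$, and a short computation gives
\[
\Im\,P_N(e^{it}) \;=\; \frac{(1-\cos\tfrac{2\pi}{N+2})\,\sin t}{(N+2)(1-\cos t)(\cos t-\cos\tfrac{2\pi}{N+2})^2}\,\sin^{2}\tfrac{(N+2)t}{2} \;\ge\; 0
\]
for $t\in(0,\pi)$. Together with the real-coefficient symmetry this puts $\Gamma([0,\pi])$ in the closed upper half-plane and $\Gamma([-\pi,0])$ in the closed lower half-plane, and the strict monotonicity of $|\Gamma(t)|$ gives injectivity on each open branch. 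The only possible self-contacts of $\Gamma$ therefore lie on the real axis at $t=\pm 2k\pi/(N+2)$ with $k\ge 2$, where $\sin^{2}\tfrac{(N+2)t}{2}$ vanishes to \emph{second} order; at such a point $\Gamma$ kisses the real axis tangentially rather than crossing it.

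A standard winding-number count then gives $\#\{z\in\mathbb{D}:P_N(z)=w_0\}=1$ for every $w_0$ in the bounded component of $\mathbb{C}\setminus\Gamma$, which is the desired univalence. The main obstacle is precisely this last step: granted the non-negativity of $\Im\Gamma$, one must verify by local analysis near each $t=\pm 2k\pi/(N+2)$ that the tangential contact contributes no net rotation to $\arg(\Gamma - w_0)$, so that the global winding number remains $+1$; together with the monotonicity of $|\Gamma|$ this pins down the topology of $\Gamma$ and delivers the conclusion.
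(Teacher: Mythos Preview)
Your approach is essentially the paper's approach. The paper's proof is a one-liner: it cites \cite{DSS} for the fact that $P_N$ is typically real, and then asserts that (together with the monotone modulus coming from the hypothesis) the boundary curve has no self-intersections, hence univalence. Your argument follows the same geometric outline---half-plane property of the boundary curve plus strictly monotone modulus forces a simple image curve, and then the argument principle gives univalence---but you derive the half-plane inequality $\Im P_N(e^{it})\ge 0$ directly from the representation in Theorem~1 rather than invoking the typically-real result from \cite{DSS}. Your treatment is in fact more careful than the paper's: you notice the tangential real-axis contacts at $t=\pm 2k\pi/(N+2)$, $k\ge 2$, and flag the local winding-number verification there as the residual obstacle, a point the paper passes over in silence.
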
  

It is proved in \cite{DSS} that the polynomial $P_N(z)$ is typically real and thus the image of the unit circle has no self intersections, the theorem is proved.

Note, that \cite{DSS}
$$  
\sqrt{R_N(-1)}=\frac14\sec^2\frac\pi{N+2}.
$$

\section{The case N=1.}
In this case $R_1(x)=1,$ thus the Koebe radius is 1. 

\section{The case N=2.}
In this case $R_2(x)=5/4+x,$ thus the Koebe radius is $\sqrt{R_2(-1)}=1/2.$

\section{The case N=3.}
In this case the polynomial $P_3(z)$ is univalent that can be verified using Brennan's criteria \cite{B}. Also

\begin{align*}
4R_3(x)=
-&\frac {2}{25}\,{\frac {37\,\cos \left( 1/5\,\pi  \right) -69+56\,
 \left( \cos \left( 1/5\,\pi  \right)  \right) ^{2}}{ \left( \cos
 \left( 1/5\,\pi  \right)  \right) ^{2}+1-2\,\cos \left( 1/5\,\pi 
 \right) }}-{\frac {32}{25}}\,{\frac { \left( 23\,\cos \left( 1/5\,
\pi  \right) -51+49\, \left( \cos \left( 1/5\,\pi  \right)  \right) ^{
2} \right) x} {-9-5\,\cos \left( 1/5\,\pi  \right) +
20\, \left( \cos
 \left( 1/5\,\pi  \right)  \right) ^{2}}}-\\
 &{\frac {32}{5}}\,{\frac {
 \left( 10\,\cos \left( 1/5\,\pi  \right) -14+9\, \left( \cos \left( 1
/5\,\pi  \right)  \right) ^{2} \right) {x}^{2}}{14\, \left( \cos
 \left( 1/5\,\pi  \right)  \right) ^{2}+3-15\,\cos \left( 1/5\,\pi 
 \right) }}
\end{align*}

$$
4R_3^\prime(x)=-{\frac {32}{25}}\,{\frac {23\,\cos \left( 1/5\,\pi  \right) -51+49\,
 \left( \cos \left( 1/5\,\pi  \right)  \right) ^{2}}{-9-5\,\cos
 \left( 1/5\,\pi  \right) +20\, \left( \cos \left( 1/5\,\pi  \right) 
 \right) ^{2}}}-{\frac {64}{5}}\,{\frac { \left( 10\,\cos \left( 1/5\,
\pi  \right) -14+9\, \left( \cos \left( 1/5\,\pi  \right)  \right) ^{2
} \right) x}{14\, \left( \cos \left( 1/5\,\pi  \right)  \right) ^{2}+3
-15\,\cos \left( 1/5\,\pi  \right) }}
$$

One can check that $R_3^\prime(x)$ is positive on [-1,1], which implies the estimate from above on Koebe radius $|P_3(-1)|=\frac{3-\sqrt 5}2\approx 0.382.$
 
\section{The case N=4.}
In this case the polynomial $P_4(z)$ is univalent, c.f. \cite{J}.
$$
4R_4(x)=40/9+(112/9)x+(124/9)x^2+(16/3)x^3
$$
and
$$
4R_4^\prime(x)=112/9+(248/9)x+(48/3)x^2
$$
The discriminant is $-37.13...$ therefore the smallest value for $R_4(x)$ is at -1,
 which implies the estimate from above on Koebe radius $|P_4(-1)|=1/3.$
 
\section{The case N=5.}

In the particular case $N=5$ we get

\begin{align*}
4R_5^\prime(x)=
&{\frac {16}{49}}\,{\frac {42\, \left( \cos \left( 1/7\,\pi  \right) 
 \right) ^{3}-31\,\cos \left( 1/7\,\pi  \right) +9-47\, \left( \cos
 \left( 1/7\,\pi  \right)  \right) ^{2}}{2\, \left( \cos \left( 1/7\,
\pi  \right)  \right) ^{3}+\cos \left( 1/7\,\pi  \right) -10\, \left( 
\cos \left( 1/7\,\pi  \right)  \right) ^{2}+5}}+\\
&{\frac {64}{49}}\,{
\frac { \left( 762\, \left( \cos \left( 1/7\,\pi  \right)  \right) ^{3
}-618\,\cos \left( 1/7\,\pi  \right) -3-323\, \left( \sin \left( 1/7\,
\pi  \right)  \right) ^{2} \right) x}{-11+2\, \left( \sin \left( 1/7\,
\pi  \right)  \right) ^{2}-39\,\cos \left( 1/7\,\pi  \right) +60\,
 \left( \cos \left( 1/7\,\pi  \right)  \right) ^{3}}}+\\
&{\frac {192}{49}
}\,{\frac { \left( 940\, \left( \cos \left( 1/7\,\pi  \right) 
 \right) ^{3}-761\,\cos \left( 1/7\,\pi  \right) +81-536\, \left( \sin
 \left( 1/7\,\pi  \right)  \right) ^{2} \right) {x}^{2}}{-18+14\,
 \left( \cos \left( 1/7\,\pi  \right)  \right) ^{3}+35\, \left( \sin
 \left( 1/7\,\pi  \right)  \right) ^{2}}}+\\
&{\frac {128}{7}}\,{\frac {
 \left( 380\, \left( \cos \left( 1/7\,\pi  \right)  \right) ^{3}-293\,
\cos \left( 1/7\,\pi  \right) +17-176\, \left( \sin \left( 1/7\,\pi 
 \right)  \right) ^{2} \right) {x}^{3}}{-22+9\, \left( \sin \left( 1/7
\,\pi  \right)  \right) ^{2}-70\,\cos \left( 1/7\,\pi  \right) +112\,
 \left( \cos \left( 1/7\,\pi  \right)  \right) ^{3}}}.
\end{align*}

\centerline{
\includegraphics[scale=0.25]{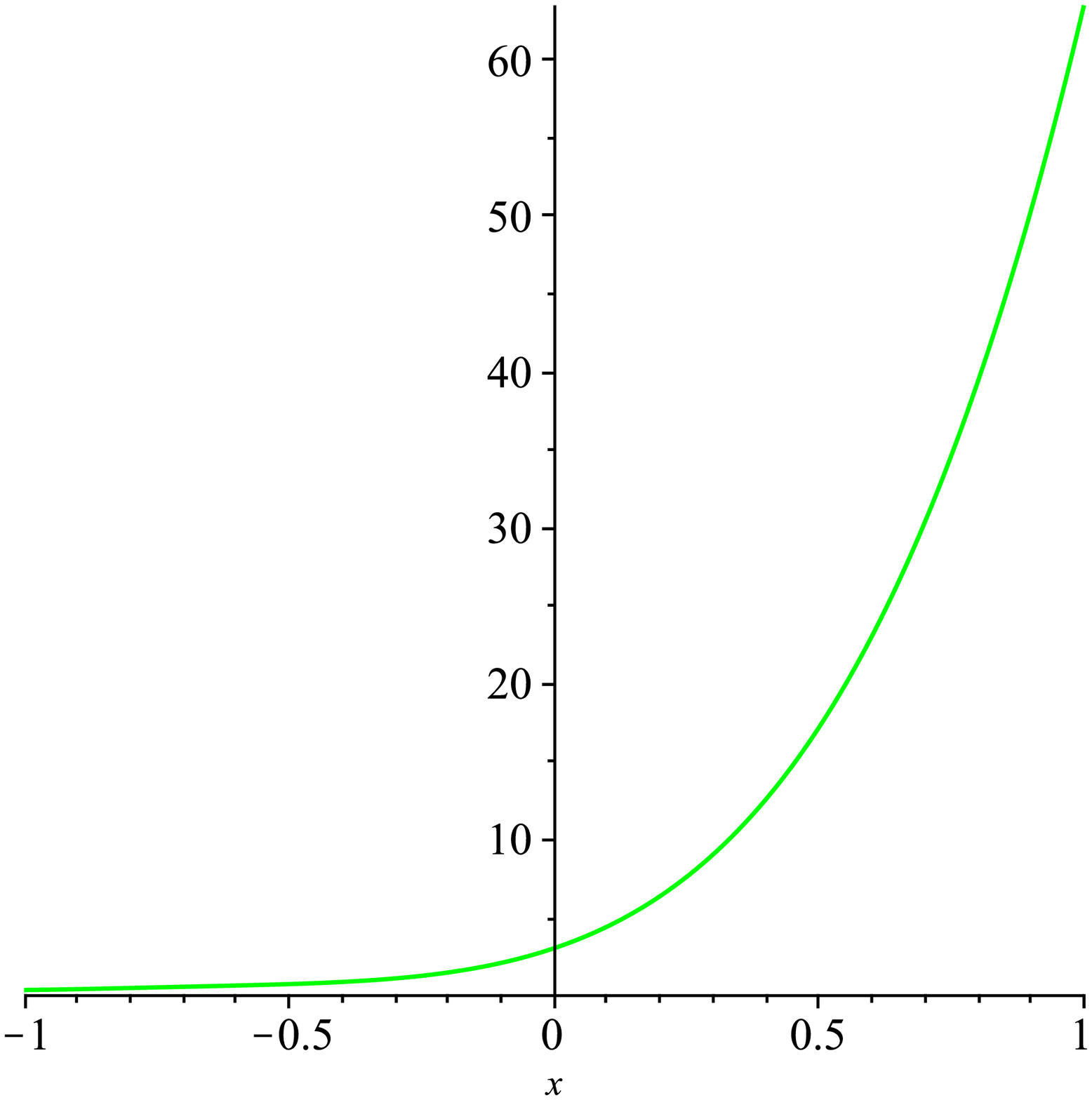}
\includegraphics[scale=0.25]{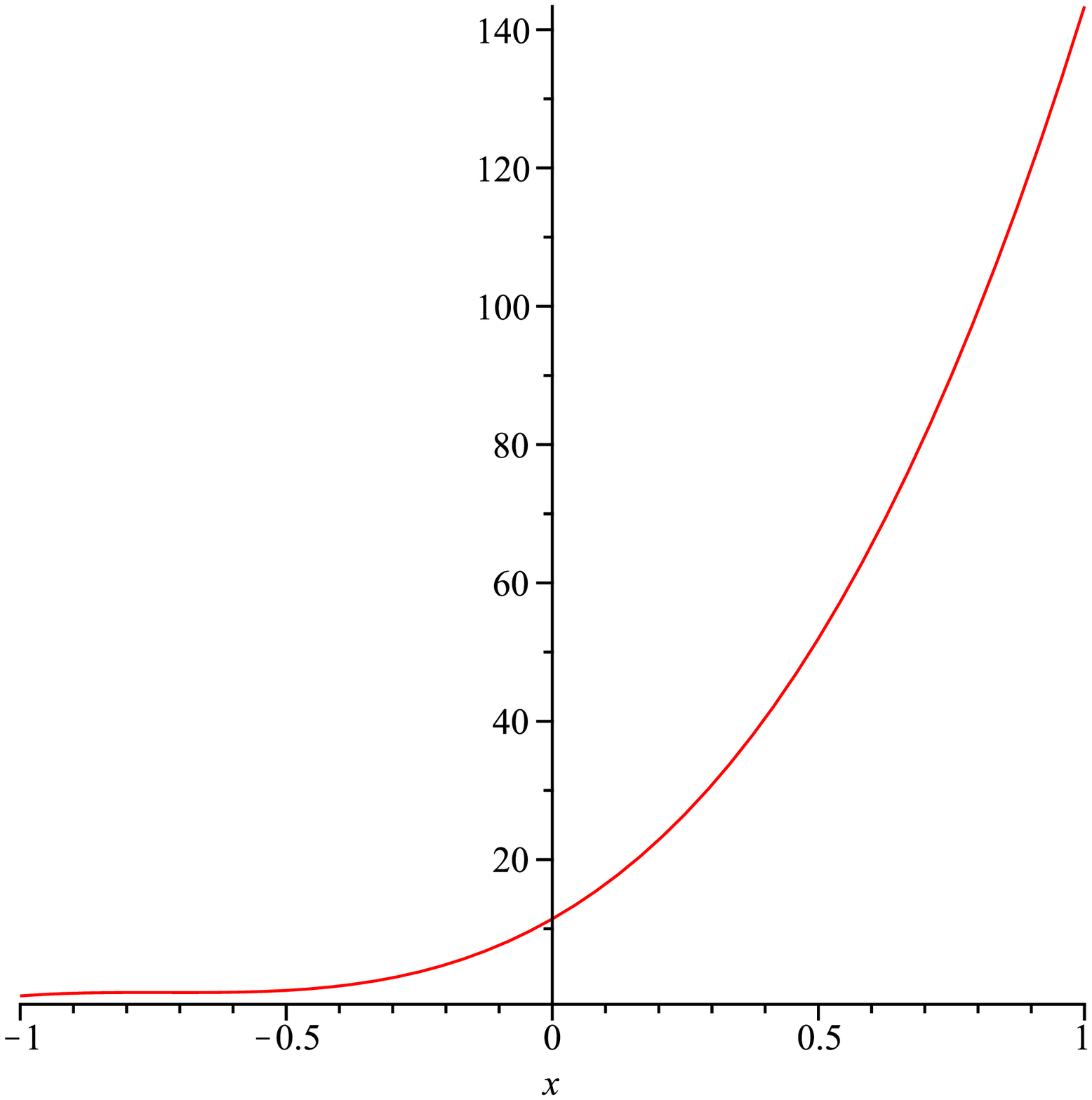}}
\centerline{Fig 2: The graphs $R_5(x)$ and $R^\prime_5(x)$}
\medskip

By decomposing into Taylor polynomial centered at -1 we get

\begin{align*}
4R_5^\prime(x)=
-&{\frac {8}{49}}\,{\frac {778868087\, \left( \cos \left(\frac{1}{7}\,\pi 
 \right)  \right) ^{2}-791395834+2270258054\, \left( \cos \left(\frac{1}{7}\,
\pi  \right)  \right) ^{3}-1666223113\,\cos \left(\frac{1}{7}\,\pi  \right) }
{10381281\, \left( \cos \left(\frac{1}{7}\,\pi  \right)  \right) ^{3}-5460108
\,\cos \left(\frac{1}{7}\,\pi  \right) -4219605\, \left( \cos \left(\frac{1}{7}\,
\pi  \right)  \right) ^{2}+752170}}\\
-&{\frac {32}{49}}\,{\frac { \left( 
-58704325+88578183\, \left( \cos \left(\frac{1}{7}\,\pi  \right)  \right) ^{2
}+57393568\, \left( \cos \left(\frac{1}{7}\,\pi  \right)  \right) ^{3}-
61237982\,\cos \left(\frac{1}{7}\,\pi  \right)  \right)  \left( 1+x \right) }
{-360031\, \left( \cos \left(\frac{1}{7}\,\pi  \right)  \right) ^{2}+121875+
515550\, \left( \cos \left(\frac{1}{7}\,\pi  \right)  \right) ^{3}-229418\,
\cos \left(\frac{1}{7}\,\pi  \right) }}+\\
&{\frac {192}{49}}\,{\frac { \left( -
212691+312494\, \left( \cos \left(\frac{1}{7}\,\pi  \right)  \right) ^{2}-
238981\,\cos \left(\frac{1}{7}\,\pi  \right) +238432\, \left( \cos \left(\frac{1}{7}
\,\pi  \right)  \right) ^{3} \right)  \left( 1+x \right) ^{2}}{2543-
6451\, \left( \cos \left(\frac{1}{7}\,\pi  \right)  \right) ^{2}-2632\,\cos
 \left(\frac{1}{7}\,\pi  \right) +6916\, \left( \cos \left(\frac{1}{7}\,\pi 
 \right)  \right) ^{3}}}+\\
&{\frac {128}{7}}\,{\frac { \left( 17-176\,
 \left( \sin \left( \frac{1}{7}\,\pi  \right)  \right) ^{2}+380\, \left( \cos
 \left(\frac{1}{7}\,\pi  \right)  \right) ^{3}-293\,\cos \left(\frac{1}{7}\,\pi 
 \right)  \right)  \left( 1+x \right) ^{3}}{-22+9\, \left( \sin
 \left(\frac{1}{7}\,\pi  \right)  \right) ^{2}-70\,\cos \left(\frac{1}{7}\,\pi 
 \right) +112\, \left( \cos \left(\frac{1}{7}\,\pi  \right)  \right) ^{3}}}.
\end{align*}

Thus, $R_5^\prime(x)=A_0+(x+1)(A_1+A_2(x+1)+A_3(x+1)^2)$ with the obviuos choice of $A_j.$ Since for $|x|\le1$ the value $x+1$ is positive and $A_i\ge0$ for $i=0,1$ then the inequality
\begin{equation}\label{discr}
A_2^2-4A_1A_3<0
\end{equation}
implies that 
\begin{equation}\label{der}
R^\prime(x)> 0;\quad x\in[-1,1].
\end{equation}
The verification of \eqref{discr} is an elementary issue based on approximations of  $\cos{\pi/7}$ and $\sin{\pi/7}$ from above and below with sufficiently large number of digits. 

This proves that the derivative does not intersect the interval and that $R^\prime_5(z)\geq{0}$. Thus, $R_5(z)$ is not decreasing on $\left[-1,1\right]$ therefore  $P_5(z)$ is univalent by Theorem \ref{thm3}. This gives us an estimate on the Koebe radius $|P_5(-1)|\approx0.3080.$ 

\section{The case $N=6.$} 
In this case

\begin{eqnarray*}
4R_6(x) &=&2+\left( 8\,\sqrt {2}-4 \right) x+ \left( 38-12\,\sqrt {2} \right) {x}^{2}+ \left( 28\,\sqrt {2}-4 \right) {x}^{3}+\\
& & \left( 28\,\sqrt {2}-10\right) {x}^{4}+\left( -16\,\sqrt {2}+32 \right) {x}^{5}.
\end{eqnarray*}

\begin{eqnarray*}
4R^\prime_6(x)&=&8\,\sqrt {2}-4+2\, \left( 38-12\,\sqrt {2} \right) x+3\, \left( 28\,\sqrt {2}-4 \right) {x}^{2}+\\
& & 4\, \left( 28\,\sqrt {2}-10 \right) {x}^{3}+5\, \left( -16\,\sqrt {2}+32 \right) {x}^{4}\\
&= & -76\,\sqrt {2}+108+ \left( 464\,\sqrt {2}-660 \right)  \left( 1+x\right) + \left( -732\,\sqrt {2}+1068 \right)  \left( 1+x \right) ^{2}+\\
& & \left( 432\,\sqrt {2}-680 \right)  \left( 1+x \right) ^{3}+ \left( -80\,\sqrt {2}+160 \right)  \left( 1+x \right) ^{4}\\
&=&\left( 108-76\, \sqrt{2}+\frac{ -660+464\, \sqrt{2}}{2\, \sqrt{108-76\, \sqrt{2}}} \left( x+1 \right)  \right) ^{2}+
\left[{\frac { \left( -660+464\, \sqrt{2} \right) ^{2} }{4 \left( 108-76\, \sqrt{2} \right) }}-732\, \sqrt{2}+1068 \right.+\\
& & \left. \left( 432\, \sqrt{2}-680\right)  \left( x+1 \right)+\left( -80\, \sqrt{2}+160 \right)\left( x+1 \right) ^{2}\right]  \left( x+1 \right) ^{2}
\end{eqnarray*}

Applying on argument similar to the formula \eqref{discr} we get formula \eqref{der} which implies the estimate for the Koebe radius $|P_6(-1)|\approx 0.2929.$ We conjecture that the obtained estimates in fact are true values.

\section{Conclusion}
In \cite{DSS} a new class of polynomials was introduced
\begin{equation*}
P_N(z)=
 \frac1{ { { U'_N}\left ({\cos \frac{\pi } 
{ N+2 } } \right) } }\sum_{k=1}^N
 { U'_{ N - k + 1 } }
\left (\cos \frac{\pi } { N+2 } \right) 
{ U_ { k - 1 } }\left (\cos \frac{\pi }{ N+2 } \right)z^k,  
\end{equation*}
and the extremal property of these polynomials was mentioned
$$
\sup_{p_N(z)=z+\sum_{k=2}^Na_kz^k}\min_t
\{\Re (p_N(e^{it})): \Im (p_N(e^{it})=0\}=P_N(-1).
$$
It was conjectured that these polynomials are univalent and solves Dimitrov problem.

In the present article the first conjecture is proved for $N=1,...,6$ thus for those $N$ the estimates from below on the radius Koebe of polynomials from $\mathcal U_N$ are obtained. It is shown that those values are smaller then the corresponding ones for Suffridge polynomials $S_{N,1}(z).$

To prove the case $N>6$ one needs to verify the criteria given by Theorem \ref{thm3}, which is a notrivial task. Currently we are working on this subject.

Also, let us mention that the polynomials $P_N(z), S_{N,1}(z)$ and their generalizations turnes out to be very helpful in the problem of stabilization of of cycles in nonlinear discrete systems \cite{DKST,DKS}.

\end{document}